\newtheorem{thm}{Theorem}
\newtheorem{lem}[thm]{Lemma}
\newtheorem{de}[thm]{Definition}
\newtheorem{prop}[thm]{Proposition}
\def\N{{\Bbb N^+}}
\def\nt{\noindent}
\def\ph{\varphi}
\def\be{\begin{enumerate}}
\def\ee{\end{enumerate}}
\def\ar{\rightarrow}
\def\to{\rightarrow}
\def\er{\exists^R}
\def\fr{\forall^R}
\def\ert{\exists^{R,2}}
\def\frt{\forall^{R,2}}
\def\en{\exists^N}
\def\fn{\forall^N}
\def\en{\exists^{\N}}
\def\fn{\forall^{\N}}
\begin{document}
\vskip5cm
\nt {\large\bf Interpreting Intuitionistic Analysis in Intuitionistic Real Algebra}

\vskip1cm

\nt{\bf Mikl\'os Erd\'elyi-Szab\'o}
\vskip0.5cm

\nt R\'enyi Alfr\'ed Institute of Mathematics,

\nt 1053 Budapest, Re\'altanoda u. 13-15.

\nt Hungary\footnote{e-mail: mszabo@@renyi.hu}
\vskip1cm

\nt {\bf Abstract.}\quad We show that in the presence of a strengthened Kripke's schema --- a plausible addition to the axiomatisation of intuitionistic analysis (see in e.g. \cite{sco1} or \cite{sco2}) --- choice sequences can be recursively encoded in intuitionistic real algebra. Choice sequences are intuitionistically meaningful counterparts of sequences of natural numbers, and with them intuitionistic analysis can be interpreted in intuitionistic real algebra. 
\vskip0.6cm

\nt {\bf Mathematics Subject Classification:} 03-B20, 03-F25, 03-F35, 03-F60.
\vskip0.6cm

\nt {\bf Keywords:} Intuitionism, Kripke's Schema, Second-order Heyting arithmetic, Intuitionistic Real algebra, Interpretation.
\pagestyle{myheadings}
\markboth{Mikl\'os Erd\'elyi-Szab\'o}{Encoding Sequences in Intuitionistic Real Algebra} 
\vskip1cm

\section{Introduction}
\noindent The present article is part of a series of papers on the complexity and expressive power of intutitonistic real algebra, the intuitionistic version of the theory of real closed ordered fields. 
The models of these theories  are the real algebraic parts of models of intuitionistic analysis based on complete Heyting algebras --- complete, bounded lattices with a binary operation $a\to b$ such that $c\wedge a\leq b$ if and only if $c\leq a\to b$. Heyting algebras correspond to intuitionistic  propositional logic in the same way Boolean algebras to classical propositional logic. In particular they serve as the structure of truth values: if a propositional sentence is an intuitionistic tautology, then it evaluates to the top element in every Heyting algebra. If the algebra in question is complete, this interpretation can be extended to intuitionistic predicate calculus, moreover there is a standard way to interpret choice sequences as special maps from the set of pairs of natural numbers to the algebra, $\xi(m,n)$ being the truth value of the statement that the $m$-th element of the sequence is $n$. For more about choice sequences the interested reader may consult with eg. \cite{tr1}.  

The simplest model of intuitionistic real algebra based on a Heyting algebra (in this case the open-set algebra of Baire space) is due to D. Scott. In \cite{scott} he defined a model where the elements are continuous functions from Baire space to the structure of reals. Moschovakis in \cite{mosc} defined a model for analysis where the structure of reals, via a standard representation due to Vesley (and described below) agrees with Scott's interpretation.  Other models of this type were investigated for example by M.D. Krol in \cite{krol} and by P. Scowcroft in \cite{sco1} and in \cite{sco2}.

\bigskip
Since classical real algebra is a particularly nice theory, decidable with complete axiomatisation, it was hoped that the intuitionistic models may also have nice properties. In the spirit of the Feferman-Vaught theorem one might have hoped that a reduction of the real-algebraic theory of Scott's model to some combination of the decidable classical theory of reals and some decidable topological structure is possible.  The first results were indeed about decidability. In \cite{scott2} Scott proved that the universal fragment of the order structure of his model is decidable. In the papers \cite{sco3} and \cite{sco4} Scowcroft showed that the validity in the model of certain $\forall\exists$-sentences of the language $L$ of ordered rings is also decidable. We proved the decidability of Scott's model restricted to the language of ordered $\mathbb Q$-vectorspaces in \cite{eszmjsl1} and that proof indeed used, via Rabin's theorem, the decidability of the underlying topological structure. But the full real algebraic structure turned out to be undecidable. There were results by Smory\'{n}ski (in \cite{sm}) and by Cherlin (in \cite{ch}) suggesting that this might be the case. Indeed, building on Cherlin's work, in \cite{eszmmlq} we were able to interpret true first order arithmetic in the $L$-theory of Scott's model. Then, in \cite{eszmjsl} we extended this result to the classes of models defined in Scowcroft's papers \cite{sco1} and \cite{sco2}  and in Krol's paper \cite{krol}. In \cite{eszmjsl} we interpreted true second-order arithmetic in the $L$-theory of Scott's model, and also showed that Moschovakis' model  for analysis can be encoded in true second-order arithmetic giving the exact complexity of the $L$-theory of Scott's model.

\bigskip
In \cite{eszmmlq2} we have shown an encoding of true second-order arithmetic in models, in this paper we give an encoding using an axiom system.  The point is that the coding will be direct, no Gödel numbering of syntax is needed. The effective translation of intuitionistic analysis into real algebra presented here yields an intuitionistically meaningful undecidability result; the previous arguments via models is less compelling if they are treated in a classical metamathematics.\footnote{observation by Philip Scowcroft} The result parallels the results in \cite{eszmmlq} and \cite{eszmjsl}  where we moved from proving the interpretability of the natural number structure in a model of intuitionistic real algebra to proving interpretability from an axiom system, moving away from the peculiarities of a given structure. A similar attempt was made in \cite{eszmarch1} where a relativised generalisation of Random Kripke's Schema (see below) was proposed and it was shown that the interpretability of second-order Heyting arithmetic in intuitionistic real algebra follows from the axiom system $\mathscr{T}$ from \cite{sco1} with Relativised Random Kripke's Schema (RR-KS). Unfortunately the consistency of the full version of RR-KS  with $\mathscr{T}$  remains open. 
In this paper we sidestep the problem of consistency to get a related result - the interpretability of full second-order arithmetic in   intuitionistic real algebra using the axiom system $\mathscr{T}$  with Random Kripke's Schema. In fact, if the first order structure of natural numbers is definable in an intuitionistic real algebra (as in the real algebraic structures of the models described in \cite{sco1} and in \cite{sco2} - the definability there follows from R-KS and the usual axioms, for the details see \cite{eszmjsl}), then so is intuitionistic analysis. 
\bigskip

From \cite{eszmjsl} let us recall the following.
We shall use the language $L_1$ from \cite{sco1}.
It contains 
two sorts of variables -- $m$, $n$, $k$, etc. ranging over the elements of 
$\omega$, and $\alpha$, $\beta$, etc. ranging over choice sequences.
We also have the equality symbol $=$.
 It 
will be used in atomic formulas of the form 
$t=t'$ or $\xi(t)=t'$ where $t$ and $t'$ are terms of natural-number sort and 
$\xi$ is of choice sequence sort.

\smallskip

In the Appendix of \cite{eszmjsl} we defined Random Kripke's Schema as the following axiom schema of analysis:

\[ \hbox{R-KS}(\ph)\equiv \exists\beta[(\exists n(\beta(n)>0)\ar\ph)\wedge 
(\neg\exists n(\beta(n)>0)\ar\neg\ph)\wedge \]
\[\forall k>0(\neg\exists n(\beta(n)=k)\ar\ph\vee\neg\ph)\wedge 
\forall k>0\,\forall n((\beta(n)=k)\ar\forall m\geq n(\beta(m)=k))]\]

\nt where $\ph$ is a formula that does not contain the choice sequence variable $\beta$ free. 

\medskip

%
%

\nt Also in \cite{eszmjsl} we have proved the following.

\begin{enumerate}
\item[(i)] The models of intuitionistic analysis described in \cite{sco1} and in \cite{sco2} are models of R-KS (\cite{eszmjsl}, Appendix). 
\item[(ii)] From a standard axiom system augmented with R-KS the definability in the language $LOR$  of ordered rings of the set of natural numbers follows: there is a formula  $N(x)$ with one free variable in the language of ordered rings such that  $\exists k\in\N (x=k)\equiv N(x)$ follows from the axioms in two-sorted intuitionistic predicate calculus with equality (\cite{eszmjsl}, p1015, Theorem 1). 

\end{enumerate}

Using these results one may prove the following.

\begin{prop}\label{rec}
The ordered semiring structure of natural numbers has a uniform definition (ie. the defining formulas do not depend on the models in question) in the real algebraic part of any model of the axiom system $\mathscr{T}$ from \cite{sco1} (also in \cite{sco2}) with R-KS added. From this follows that each recursive function has such a uniform definition. The restriction to $\{x | N(x)\}$ of the quantifiers in the formula defining the recursive function/relation in the structure of natural numbers works.
\end{prop}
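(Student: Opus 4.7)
The plan is to decompose the proof into three steps: first, secure the defining formula $N(x)$ from (ii); second, confirm that the ordered semiring operations $(+,\cdot,<,0,1)$ restrict correctly to $\{x:N(x)\}$; third, relativise the standard arithmetical definitions of recursive functions to this set.

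For the first step I would invoke (ii) as a black box: it already supplies a formula $N(x)$ in $LOR$, uniform across all models of $\mathscr{T}$ with R-KS, which is provably equivalent to $\exists k\in\N\,(x=k)$. No new work is required here. For the second step, note that $+$, $\cdot$ and $<$ are primitive symbols of $LOR$ and are interpreted in the real algebraic part by the underlying intuitionistic ordered field operations; what needs checking is that $N(x)$ is provably closed under $+$ and $\cdot$, that $0$ and $1$ satisfy $N$, and that the usual ordered semiring identities hold for elements satisfying $N$. All of this follows from the equivalence $N(x)\fif\exists k\in\N\,(x=k)$ together with the standard ordered field axioms in $\mathscr{T}$; the verification is routine because the relevant semiring identities are essentially quantifier-free and transfer immediately from the natural number sort across the equivalence guaranteed by (ii).

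For the third step, take any recursive function $f:\n^k\to\n$ (or a recursive relation). By a standard result, which holds also intuitionistically, $f$ has a $\Sigma_1$-definition $\ph_f(\bar x,y)$ over $(\n,0,1,+,\cdot,<)$ obtained via Gödel's $\beta$-function to code finite computations. Form its relativisation $\ph_f^N$ by restricting each quantifier to $N$: each $\exists k$ becomes $\exists x\,(N(x)\wedge\cdots)$ and each $\forall k$ becomes $\forall x\,(N(x)\to\cdots)$. Then, by induction on the structure of $\ph_f$ and using the first two steps together with (ii), the theory $\mathscr{T}$ + R-KS proves the equivalence between $\ph_f$ interpreted over the natural number sort and $\ph_f^N$ interpreted over $\{x:N(x)\}$, giving the required uniform definition.

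The main obstacle I would expect is the intuitionistic content of the third step: the $\Sigma_1$-definability of recursive functions must be carried out in Heyting arithmetic rather than classical $PA$, and the inductive verification of the equivalence of $\ph_f$ with $\ph_f^N$ must respect intuitionistic logic, especially in the treatment of implication and negation. Once it is known, via (ii), that $N(x)$ captures exactly the natural number sort as embedded in the real algebraic part, the induction runs without surprise, and the conclusion of the proposition follows.
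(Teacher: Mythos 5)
Your proposal is correct and follows essentially the route the paper intends: the paper gives no written proof (the proposition is stated with a bare \qed), but its statement already indicates that one takes the formula $N(x)$ supplied by the R-KS definability result and relativises the quantifiers of the standard arithmetical ($\Sigma_1$, via the $\beta$-function) definitions of recursive functions to $\{x \mid N(x)\}$, which is exactly your three-step argument. Your added care about closure of $N$ under the semiring operations and about carrying out the $\Sigma_1$-definability intuitionistically is a reasonable filling-in of details the paper leaves implicit.
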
\qed
\bigskip

Also, the real algebraic structure  is defined in intuitionistic analysis:

In~\cite[pages 134-135]{kv} Vesley considers a species $R$ of {\it real-number generators}: $\xi\in R$ (also denoted by $R(\xi)$)
if and only if the sequence $2^{-x}\xi(x)$ ($x\in\N$) of dyadic fractions is a Cauchy-sequence with
$\forall k\exists x\forall p |2^{-x}\xi(x)-2^{-x-p}\xi(x+p)|<2^{-k}$, i.e.
if and only if $$\forall k\exists x\forall p 2^k|2^p\xi(x)-\xi(x+p)|<2^{x+p} \quad(*)$$ Note that to make what comes below simpler we  changed the domains of real number generators from $\omega$ to $\N$.

\smallskip

Equality, ordering, addition and multiplication on $R$ are also defined 
 (cf. also \cite[pages 20-21]{heyting}),
and the definitions can be extended readily to polynomials  of choice sequences with nonnegative coefficients.

$\xi$ is a {\it global real-number generator} just in case $ R(\xi)$ holds. We shall use the letters  $f$, $g$, $u$ etc.  to range over global real-number generators and we shall use the defined quantifiers 
$$\er u\,\theta:\equiv  \exists u(R(u)\wedge \theta)$$
$$\fr u\,\theta:\equiv  \forall u(R(u)\ar \theta)$$

\noindent and when we have the definition $N(x)$ of the natural numbers
$$\en u\,\theta:\equiv  \exists u(N(u)\wedge \theta)$$
$$\fn u\,\theta:\equiv  \forall u(N(u)\ar \theta)$$

For each natural number $n$ there is a corresponding global real-number generator $f_n$ (also denoted by $n$ in the context of real numbers only) defined as follows: 
$f_n(l) =n2^l$. Then in the real numbers,
\[  f_n f=\overbrace{f+\cdots+f}^{n} \]


A choice sequence $\eta$ with range in the set $\{0,1\}$ (a $0-1$ sequence) corresponds to the real number generator 
$$\xi: x\geq 1\longmapsto \Sigma_{i=1}^x\;\eta(i) 2^{x-i}$$
with the corresponding real
$$\Sigma_{i=1}^\infty\;\eta(i) 2^{-i} = \lim_{x\to\infty}  \Sigma_{i=1}^x\;\eta(i) 2^{x-i} $$ in the closed unit interval.
In fact, $$2^k|2^p\xi(x)-\xi(x+p)| = 2^k(\Sigma_{i=1}^{x+p}\;\eta(i) 2^{x+p-i} - 2^p\;\Sigma_{i=1}^x\;\eta(i) 2^{x-i}) \leq $$
$$2^k\;\Sigma_{i=x+1}^{x+p}\; 2^{x+p-i} < 2^{k+p}$$
so $x=k$ in $(*)$ shows that $\xi$ is a real number generator.
\medskip

\noindent We shall use the (definable) quantification $\ert$ and $\frt$ over real number generators with range in the set $\{0,1\}$.

\begin{de}
A real number $y$ is $p$-rational ($p\in\N$) if $\en q\; (y\cdot 2^p = q)$. Note that "y is p-rational" (for some $p$) and "y is not p-rational for any p" are definable in the language $LOR$.
\end{de}

\section{Encoding choice sequences}

We encode choice sequences as real numbers in the closed unit interval such a way that atomic formulas of the form $\beta(m)=k$ could be translated into a formula $C(b,m,k)$ in the language of ordered rings where $b$ is a code for $\beta$. Also the set of codes will be definable, a code will be unique (with respect to equality of real numbers) and each code will be a code of a unique choice sequence (see Proposition~\ref{cd2} below).

Some notation: $\langle.,.\rangle$ will denote a recursive injective and surjective pairing function.

The code $C_\xi$ of a choice sequence $\xi$ will be a real number with generator a $0-1$ sequence $C'_\xi$ such that
\be
\item[(i)]  $C'_\xi(2\langle m,k\rangle) = 1$ iff $\xi(m)=k$  ($m,k\in\omega$).
\item[(ii)]  $C'_\xi(4m+1) = 0$ and $C'_\xi(4m+3) = 1$  ($m\in\omega$).
\ee

\smallskip

The second (technical) condition above is needed to make sure that any real number is a code of at most one sequence and to be able to deduce properties of a $0-1$ real number generator from the properties of the corresponding real (see Lemma \ref{cd1}). 
\medskip

Now some details. First of all we have to express $C'_\xi(p) = 0$ with a formula on the languge $LOR$ of ordered rings involving the real numbers $C_\xi\in[0,1]$ and $p$ with $N(p)$. The next lemma gives the required translation.

 In what follows, for every $0-1$ sequence $x'$ let $x$ be the corresponding real number $x = \Sigma_{i=1}^\infty\;x'(i) 2^{-i}.$ Then 
$x'(p) = 0$ means that in the dyadic expansion of  $x$ there is no $1/2^p$. The next lemma connects values of $x'$ with properties of $x$ expressible by a $LOR$-formula.
\medskip

\begin{lem}\label{cd1}
\begin{enumerate}
\item[(i)]  If $x'(m) = 0$ for all $m>p$,  then $\en q\; (x\cdot 2^p = q)$, ie. x is p-rational; vice versa, if x is p-rational ($\en q\; (x\cdot 2^p = q)$) and $x'(m) = 0$ for some $m>p$, then $x'(m)=0$ for all $m>p$. If $x'(4m+1) = 0$ and $x'(4m+3) = 1$  for all $m\in\omega$, then for all $p\in\N$ $x$ is not $p$-rational.
\item[(ii)] Assume $x'(4m+1) = 0$ and $x'(4m+3) = 1$  for all $m\in\omega$ and $x'(p)=0$. Let $x_p = \Sigma_{i=1}^{p-1}x'(i)\cdot 1/{2^i}$ a $(p-1)$-rational real. Then 
$\en q\; (x_p\cdot 2^{p-1} = q)$ and $0< x-x_p<1/{2^p}$.
\item[(iii)] Assume that $x'(i) = 0$ for some $i>p$. If $\exists y\;\en q\; (y\cdot 2^{p-1} = q \wedge 0< x-y<1/{2^p})$,  then $x'(p) = 0$. In this case $y$ has a representation $y'$ such that $y'(j)=x'(j)$ for all $j<p$.

\end{enumerate}
\end{lem}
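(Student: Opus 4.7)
My plan is to work uniformly with the dyadic partial sums $s_k := \sum_{i=1}^{k} x'(i) 2^{-i}$, each of which is manifestly $k$-rational --- indeed $s_k = r_k/2^k$ with $r_k := \sum_{i=1}^{k} x'(i) 2^{k-i} \in \n$. The fundamental inequality $0 \le x - s_k \le 2^{-k}$ sharpens to a strict lower bound $x - s_k > 0$ as soon as some $x'(j) = 1$ with $j > k$, and to a strict upper bound $x - s_k < 2^{-k}$ as soon as some $x'(j) = 0$ with $j > k$. All three parts of the lemma then reduce to elementary integer arithmetic together with decidable comparisons on $\n$.

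For part (i), the forward direction is immediate since $x'(m) = 0$ for all $m > p$ gives $x = s_p = r_p/2^p$. For the converse, write $x = q/2^p$; then $x - s_p = (q - r_p)/2^p$ lies in $[0, 2^{-p}]$ and is an integer multiple of $2^{-p}$, so the natural number $q - r_p$ lies in $\{0,1\}$. If $q - r_p = 1$ then $x - s_p = 2^{-p}$ forces $x'(i) = 1$ for every $i > p$, contradicting the hypothesis $x'(m_0) = 0$; hence $q - r_p = 0$, whence $x = s_p$ and $x'(m) = 0$ for all $m > p$. The last assertion of (i) follows by contradiction: if $x$ were $p$-rational, pick any $m$ with $4m+1 > p$; the converse just proved together with $x'(4m+1) = 0$ would force $x'(j) = 0$ for every $j > p$, clashing with $x'(4m'+3) = 1$ at any $m'$ with $4m'+3 > p$.

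Part (ii) is a direct application: since $x'(p) = 0$, the difference $x - x_p$ (with the paper's $x_p$ being $s_{p-1}$) equals $\sum_{i > p} x'(i) 2^{-i}$; positivity follows from $x'(4m+3) = 1$ at any $m$ with $4m+3 > p$, and the strict upper bound from $x'(4m+1) = 0$ at any $m$ with $4m+1 > p$, which yields $x - x_p \le 2^{-p} - 2^{-(4m+1)} < 2^{-p}$.

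Part (iii) is the main payload. Write $s_p = r/2^p$ and $y = 2q/2^p$ using the assumed $(p-1)$-rationality of $y$. The hypothesis that $x'(i_0) = 0$ for some $i_0 > p$ sharpens the upper bound to $x < s_p + 2^{-p} = (r+1)/2^p$. Combined with $y < x$ this yields $2q < r + 1$, hence $2q \le r$; combined with $x - y < 2^{-p}$ and $s_p \le x$ it yields $y > (r-1)/2^p$, hence $2q \ge r$. Therefore $2q = r$, so $r$ is even, and since $r \equiv x'(p) \pmod 2$ we conclude $x'(p) = 0$. Then $y = r/2^p = s_p = \sum_{i < p} x'(i)/2^i$, so the $0$-$1$ sequence $y'$ with $y'(j) := x'(j)$ for $j < p$ and $y'(j) := 0$ for $j \ge p$ represents $y$ and agrees with $x'$ below $p$. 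The main obstacle will be the bookkeeping in part (iii), where two strict real inequalities must be compressed into the single integer equation $2q = r$ with no non-constructive case split; the decidability of order and equality on $\n$ together with the explicit representation of partial sums as integer multiples of $2^{-p}$ make this possible, provided each strict/non-strict comparison is oriented correctly.
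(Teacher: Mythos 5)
Your proofs of (i) and (ii) are correct and, for (ii), essentially identical to the paper's (the paper dismisses (i) as trivial; your explicit converse argument via $q-r_p\in\{0,1\}$ is a sound filling-in). For (iii), however, you take a genuinely different route. The paper fixes a finite representation $y'=\sum_{i=1}^{p-1}y'(i)2^{-i}$ of $y$ and argues digit by digit: it locates the least index $j<p$ with $y'(j)\neq x'(j)$ and derives a contradiction with $0<x-y<2^{-p}$ in each of the two cases $y'(j)<x'(j)$ and $y'(j)>x'(j)$ (the latter using the hypothesis that $x'(i)=0$ for some $i>p$), concluding first that $y'$ agrees with $x'$ below $p$ and then that $x'(p)=0$. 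You instead convert everything to integer arithmetic: writing $y=2q/2^p$ and $s_p=r/2^p$, you squeeze $2q\le r$ from $y<x<(r+1)/2^p$ and $2q\ge r$ from $y>x-2^{-p}\ge(r-1)/2^p$, conclude $2q=r$, and read off $x'(p)=0$ from the parity $r\equiv x'(p)\pmod 2$, after which $y=s_{p-1}$ gives the required representation. Your version avoids the case split on the least disagreement index and the attendant bookkeeping with a chosen representation of $y$, at the cost of making the parity observation do the work; both arguments are constructively unproblematic since the comparisons are pushed down to decidable order on the integers. One point worth making explicit in your write-up is that the strict bound $x<s_p+2^{-p}$ really does require the hypothesis $x'(i_0)=0$ for some $i_0>p$ (you use it, but only in passing), exactly as the paper needs it in its case $y'(j)>x'(j)$.
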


\begin{proof}
\begin{enumerate}
\item[(i)] is trivial.\qed
\item[(ii)] $x-x_p = \Sigma_{i\geq p}x'(i)\cdot 1/{2^i} \geq 1/{2^{4p+3}}>0$.

\noindent Since $x'(p) = 0$ and $x'(4p+1)=0$, $x-x_p = \Sigma_{i > p}x'(i)\cdot 1/{2^i} < \Sigma _{i > p} 1/{2^i} = 1/2^p$. \qed
\item[(iii)] 
First note that $x'$ being a $0-1$ choice sequence implies that $x'(k) = 0$ or $x'(k) = 1$ for all $k$.
 By $(i)$ $y$ is of the form   $y  = \Sigma_{i=1}^{p-1}y'(i)\cdot 1/{2^i}$ for an appropriate representation $y'$.  

If $j<p$ is the least index with  $y'(j) \neq x'(j)$ then if $y'(j) < x'(j)$ we have $x-y\geq 1/2^j - \Sigma_{m=j+1}^{p-1} 1/2^m >  1/2^p$. If $y'(j) > x'(j)$ then  $x-y\leq -1/2^j + \Sigma_{m>j} x'(m)\cdot 1/2^m <  -1/2^j +  1/2^j = 0$, since $x'(i) = 0$ for some $i>p$.

If for all $i<p$ $y'(i) = x'(i)$ and $x'(p) = 1$, then $x-y \geq 1/2^p$. 

\end{enumerate}
\end{proof}


To apply the lemma we need to take care of the condition on the "returning zeros". For a $0-1$  sequence $x'$ corresponding to the real number $x$ let $[x,p]$ denote the $LOR$-formula equivalent to 
$$\exists y (\en q\; (y\cdot 2^{p-1} = q) \wedge 0< x-y<1/{2^p})$$

\begin{lem}\label{cd!}
$\forall m\; x'(4m+1) = 0$,  $x'(4m+3) = 1$  and $x'(p) = 0$  iff  $\forall m ([x, 4m+1]\wedge\neg[x, 4m+3]) \wedge [x, p]$.
\end{lem}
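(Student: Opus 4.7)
The plan is to apply Lemma~\ref{cd1} in each direction, with a mild strengthening of part~(iii) used in the backward direction.

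For $(\Rightarrow)$, assume the sequence conditions on $x'$. Each $[x,4m+1]$ and $[x,p]$ follows at once from Lemma~\ref{cd1}(ii), whose global hypotheses are precisely the assumed conditions. For $\neg[x,4m+3]$, note that $x'(4(m+1)+1) = x'(4m+5) = 0$ supplies the side condition of Lemma~\ref{cd1}(iii) at $p = 4m+3$; so if $[x,4m+3]$ held, (iii) would force $x'(4m+3) = 0$, contradicting $x'(4m+3) = 1$.

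For $(\Leftarrow)$, first derive $x'(4m+1) = 0$ (for every $m$) and $x'(p) = 0$ directly from $[x,4m+1]$ and $[x,p]$. Since $x'(k) \in \{0,1\}$ is decidable for each $k$, it suffices to refute $x'(k) = 1$: in that case $x - x_k \geq 1/2^k$ and $x \leq x_k + 1/2^{k-1}$, so any $(k-1)$-rational $y < x$ must satisfy $y \leq x_k$ (the next $(k-1)$-rational above $x_k$, namely $x_k + 1/2^{k-1}$, is already $\geq x$), whence $x - y \geq 1/2^k$, contradicting $[x,k]$.

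To obtain $x'(4m+3) = 1$, again refute $x'(4m+3) = 0$ by showing it would entail $[x,4m+3]$, contrary to $\neg[x,4m+3]$. Take $y := x_{4m+3}$, a $(4m+2)$-rational. The strict upper bound $x - x_{4m+3} < 1/2^{4m+3}$ follows as in the proof of~(ii), using the established $x'(4m+5) = 0$. For the strict lower bound, invoke $[x,4m+5]$: the witness $y_0$ and $x_{4m+5}$ are both $(4m+4)$-rationals in $[x - 1/2^{4m+5},\,x]$, an interval of length strictly less than the spacing $1/2^{4m+4}$ of $(4m+4)$-rationals; hence $y_0 = x_{4m+5}$, and since $y_0 < x$ we get $x - x_{4m+5} > 0$, whence $x - x_{4m+3} \geq x - x_{4m+5} > 0$.

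The main obstacle is that Lemma~\ref{cd1}(iii), as stated, requires a zero bit beyond $p$ as a side condition, which is not initially available in the backward direction. The fix is the direct argument of the third paragraph, giving $[x,k] \Rightarrow x'(k) = 0$ unconditionally (using decidability of $x'(k) \in \{0,1\}$); this result then allows the fourth paragraph to treat the $4m+3$ positions via uniqueness of the best $(4m+4)$-rational approximation below $x$.
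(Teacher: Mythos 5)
Your proof is correct, and while it shares the paper's overall skeleton (two directions, Lemma~\ref{cd1}, decidability of the bits $x'(k)\in\{0,1\}$), the three key sub-arguments are genuinely different. In the forward direction you obtain $\neg[x,4m+3]$ by feeding the returning zero $x'(4m+5)=0$ into Lemma~\ref{cd1}(iii) at $p=4m+3$; the paper instead runs an explicit computation showing that a witness for $[x,4m+3]$ would force an integer to lie strictly between $-1$ and $0$. Your route is shorter and reuses existing machinery. In the backward direction, your unconditional implication $[x,k]\Rightarrow x'(k)=0$ (refuting $x'(k)=1$ via the $1/2^{k-1}$-spacing of $(k-1)$-rationals and the decidable order on the scaled integers) replaces the paper's case split on whether some bit in $(4m+1,4m+7)$ vanishes, which routes the bad case through $[x,4m+5]$ before Lemma~\ref{cd1}(iii) can be applied; your version is cleaner and, as you note, exactly what is needed to avoid the side condition of (iii) that is not yet available. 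Finally, for $x'(4m+3)=1$ the paper simply cites Lemma~\ref{cd1}(ii), whose hypotheses include $\forall m\,x'(4m+3)=1$ --- the very thing being proved --- and in particular the positivity $0<x-x_{4m+3}$ is not justified there; your argument via $[x,4m+5]$ and the uniqueness of the $(4m+4)$-rational in an interval shorter than $1/2^{4m+4}$ supplies exactly that missing lower bound. So your proof is not only valid but tighter than the paper's at its one delicate point.
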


\begin{proof}
First assume that $\forall m\; x'(4m+1) = 0$,  $x'(4m+3) = 1$  and $x'(p) = 0$.  Then by Lemma \ref{cd1} (ii)  $\forall m [x, 4m+1]$ and $[x, p]$ follows. To finish the left to right direction assume that $[x, 4m+3]$ for some $m\in\omega$. Then there is $y$, $(4m+2)$-rational such that
$0< x-y<1/{2^{4m+3}}$, ie. since $x'(4m+3) = 1$,

\noindent $0< 2^{4m+2}\Sigma_{i=1}^{4m+2}(x'(i)\cdot 1/{2^i})- 2^{4m+2}\cdot y + 1/2 + 2^{4m+2}\cdot\Sigma_{i=4m+4}^{\infty}(x'(i)\cdot 1/{2^i}) <1/2$.

\noindent $A=2^{4m+2}\Sigma_{i=1}^{4m+2}(x'(i)\cdot 1/{2^i})- 2^{4m+2}\cdot y$ is an integer, and since $\forall m\; x'(4m+1) = 0$,  $x'(4m+3) = 1$,  for $q=2^{4m+2}\cdot\Sigma_{i=4m+4}^{\infty}(x'(i)\cdot 1/{2^i})$ we have $0<q<1/2.$ 

\noindent Thus $0< A + 1/2 + q < 1/2$, ie. $-1 < -1/2-q<A<-q<0$, a contradiction for an integer $A$. So $\neg[x, 4m+3]$ follows.
\medskip

Next assume that $\forall m ([x, 4m+1]\wedge\neg[x, 4m+3]) \wedge [x, p]$. If $x'(i) = 0$ for some $i$, $4m+1 < i < 4m+7$,  a decidable property, then by Lemma \ref{cd1} (iii) $x'(4m+1)=0$ follows from $[x,4m+1]$. Otherwise $x'(i)=1$ for every $4m + 1 < i < 4m+7$. By $[x,4m+5]$ there is a $(4m+4)$-rational $y$ such that $0<x-y<1/2^{4m+5}$ and we may assume that $y  = \Sigma_{i=1}^{4m+4}(y'(i)\cdot 1/{2^i})$. As in the proof of  Lemma \ref{cd1} (iii), 
if $j<4m+5$ is the least index with  $y'(j) \neq x'(j)$ then if $y'(j) < x'(j)$ we have $x-y\geq 1/2^j - \Sigma_{i=j+1}^{4m+4}\; 1/2^i >  1/2^{4m+5}$. If $y'(j) > x'(j)$ then  $x-y\leq -1/2^j + \Sigma_{i>j} x'(i)\cdot 1/2^i \leq  -1/2^j +  1/2^j = 0$, both cases contradict to $0<x-y<1/2^{4m+5}$.  So 
we have that $y'(j)=x'(j)$ for all $j<4m+5$. But then 
$0<x-y$ and, since $x'(i)=1$ for every $4m + 1 < i < 4m+7$,  $ x-y = \Sigma_{i=4m+5}^{\infty} x(i)/{2^i} \geq 1/2^{4m+4} - 1/2^{4m+6} > 1/2^{4m+5}$, again a contradiction. So $x'(i) = 0$ for some $i > 4m+1$ and $x'(4m+1)=0$ follows. The same reasoning proves that  $x'(p)=0$.

\noindent If $x'(4m+3)=0$ then, since $x'(4m+5)=0$, $[x,4m+3]$ follows from Lemma \ref{cd1} (ii), a contradiction, so $x'(4m+3)=1$. 
\end{proof}
\medskip

\begin{de}\label{codedef}
For a choice sequence $\xi''$ let $\xi'$ be the $0-1$ sequence with alternating $0$ and $1$ at odd natural numbers and with $\xi''(m)=k$ (decidable) iff $\xi'(2\langle m, k\rangle) = 1$ (decidable).
Let $\xi$ be the corresponding real number.   
The code of an atomic formula of the form $\xi''(k)=m$ is the $LOR$-formula $C(\xi,k,m)$ equivalent to $\forall p ([\xi, 4p+1]\wedge \neg[\xi, 4p+3]) \wedge \neg[\xi, 2\langle k, m\rangle]$.

\noindent A  $0-1$ sequence $x'$ is a code ($CODE(x')$), if it has alternating $0$ and $1$ at odd natural numbers, and for all $m$ there is a unique $k$ such that $x'(2\langle m, k\rangle) = 1$, ie. 
$$CODE(x') \hbox{ iff } \forall m ([x, 4m+1]\wedge\neg[x,4m+3]\wedge \exists !k\;\neg[x,2\langle m,k\rangle]).$$ So the set of codes is definable in real algebra by a $LOR$-formula.

For a choice sequence $\alpha$ and a $0-1$ sequence $u'$, $u'$ is the code of $\alpha$, denoted as $C(\alpha, u')$ if  $CODE(u') \;\wedge\;\forall k \forall m\; (\alpha(k) = m \leftrightarrow C(u,k,m))$ where $u$ is the real number corresponding to $u'$. 
\end{de}

From the above definitions and lemmas follows that each choice sequence has a unique code and a code corresponds to a unique choice sequence:

\begin{prop}\label{cd2}
\begin{enumerate}
\item[(i)] $\forall \alpha\ert !u'\; C(\alpha, u')$
\item[(ii)] $\frt u'(CODE(u') \to \exists! \alpha\; C(\alpha, u'))$
\ee
\end{prop}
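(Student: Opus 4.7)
The plan is to verify both parts directly from the construction in Definition~\ref{codedef}, with Lemma~\ref{cd!} as the key translation between pointwise values of a $0$-$1$ generator $u'$ and the $LOR$-formula $[u,p]$.

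For (i), given $\alpha$, I would define $u'$ exactly as prescribed: $u'(4m+1)=0$, $u'(4m+3)=1$, and $u'(2\langle m,k\rangle)=1$ iff $\alpha(m)=k$. Since $\alpha(m)=k$ is decidable, this gives a legitimate $0$-$1$ choice sequence, and the estimate on $|2^p\xi(x)-\xi(x+p)|$ recalled at the end of the introduction shows it is automatically a real number generator. To conclude $C(\alpha,u')$ I would first verify $CODE(u')$: the alternating condition at odd positions is built in, and uniqueness of $k$ for each $m$ is simply the functionality of $\alpha$. Lemma~\ref{cd!} then reexpresses these pointwise statements in the $LOR$-language via $[u,4m+1]$, $\neg[u,4m+3]$, and $\exists !k\,\neg[u,2\langle m,k\rangle]$. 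The biconditional $\alpha(k)=m\leftrightarrow C(u,k,m)$ is immediate from the same lemma, since $C(u,k,m)$ unpacks to the alternating condition together with $u'(2\langle k,m\rangle)=1$.

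For uniqueness in (i), if $v'$ is another $0$-$1$ generator with $C(\alpha,v')$, then $CODE(v')$ fixes $v'$ at every odd index, and the biconditional with $\alpha(k)=m$ combined with the $\exists !$ clause pins $v'$ down at $2\langle k,m\rangle$ for every $k,m$. Surjectivity of the pairing function then gives $v'=u'$ pointwise, and hence the same real.

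For (ii), assume $CODE(u')$ with associated real $u$. The $\exists !k$ clause provides, for each $m$, a unique $k$ satisfying $\neg[u,2\langle m,k\rangle]$, equivalently (by Lemma~\ref{cd!}) $u'(2\langle m,k\rangle)=1$. Setting $\alpha(m):=k$ defines a choice sequence --- the uniqueness clause is precisely what is needed to form $\alpha$ intuitionistically --- and $C(\alpha,u')$ is then immediate. Uniqueness of $\alpha$ follows because any $\alpha'$ with $C(\alpha',u')$ satisfies $\alpha'(k)=m\leftrightarrow C(u,k,m)\leftrightarrow\alpha(k)=m$ pointwise. There is no genuine obstacle once Lemma~\ref{cd!} is in hand; the only delicate point is ensuring that the $\alpha$ extracted in (ii) is intuitionistically legitimate as a choice sequence, and this is exactly what the $\exists !$ clause in the definition of $CODE$ guarantees.
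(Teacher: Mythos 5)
Your proposal is correct and follows essentially the same route as the paper: both arguments reduce $C(\alpha,u')$ via Definition~\ref{codedef} and Lemma~\ref{cd!} to the pointwise conditions $u'(4p+1)=0$, $u'(4p+3)=1$, $u'(2\langle k,m\rangle)=1\leftrightarrow\alpha(k)=m$, and then invoke the bijectivity of the pairing function to pin $u'$ down (and, for (ii), to read $\alpha$ off from the $\exists!k$ clause). You are merely more explicit than the paper about the existence half of (i) --- constructing $u'$ and checking it is a $0$-$1$ real number generator satisfying $CODE$ --- which the paper leaves implicit.
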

\begin{proof}

$(i)$ Fix the sequence $\alpha$ and assume $C(\alpha, u')$ for some $0-1$ sequence  $u'$.

 \noindent By Definition~\ref{codedef}, for the real number $u$ corresponding to $u'$
$$\forall k \forall m\; (\alpha(k) = m \leftrightarrow C(u,k,m))$$ 
$$\forall k \forall m\; (\alpha(k) = m \leftrightarrow \forall p [u, 4p+1] \wedge\neg[u, 4p+3]\wedge \neg[u, 2\langle k, m\rangle])$$ and by Lemma~\ref{cd!}
$$\forall k \forall m\; (\alpha(k) = m \leftrightarrow \forall p\;( u'(4p+1) = 0 \wedge  u'(4p+3) = 1 )\wedge u'(2\langle k, m\rangle) = 1)$$ Since the pairing function is $1-1$ and onto, this determines a unique $0-1$ sequence  $u'$ fulfilling the requirement.

(ii) is similar using the equivalences of $(i)$.
\end{proof}

A formal language for intuitionistic analysis is given in \cite{kv}, 5.5; a more concise treatment is found in \cite{sco1}, pp. 149-150. Formulas are built from first order atomic formulas and formulas of the form $\xi(t_1)=t_2$ where $t_1$ and $t_2$ are terms of natural number sort, $\xi$ is of sequence short. In the following definition and proposition we restrict ourselves to formulas built from these atomic ingredients using the usual connectives and first and second-order quantifiers. However apart from functions that can be explicitly defined using the usual language of arithmetic (symbols for addition, multiplication, 0, 1 and less than) there are term-building operations used in the formal language mentioned above whose arguments include variables of both sorts: 
$$\Sigma_{y<x}\alpha(y), \quad \Pi_{y<x}\alpha(y), \quad \min_{y<x}\alpha(y), \quad \max_{y<x}\alpha(y)$$
we can exclude these using principles costumary in the axiomatic development, for example for finite sums ($S$ denotes the successor function)
$$\forall\alpha\exists\beta[\beta(0)=0 \wedge \forall z(\beta(S(z))= \beta(z)+\alpha(z))]$$
so if  a term $\Sigma_{y<x}\alpha(y)$ occurs in a formula $\ph$, $\ph(\Sigma_{y<x}\alpha(y))$ maybe replaced by
$$\exists\beta[\beta(0)=0 \wedge \forall z(\beta(S(z))= \beta(z)+\alpha(z))\wedge\ph(\beta(x))]$$
The treatment of finite products, minima and maxima is similar.
\begin{de}\label{tr} For each choice sequence variable $\xi''$ let $\xi'$ be a designated variable ranging over $0-1$ sequences and $\xi$ a designated variable ranging over reals. For each $L_1$-formula $\theta$ we define its $LOR$ translation $\tau(\theta)$ as follows.
\begin{itemize}
\item[-] If $\theta$ is first order atomic, $\tau(\theta):=\theta$
\item[-] $\tau(\xi_i''(t_1)=t_2)$ is the $LOR$-formula $C(\xi_i, t_1, t_2)$
\item[-] $\tau(\psi_1\circ \psi_2) := \tau(\psi_1)\circ \tau(\psi_2)$ for $\circ = \wedge, \vee, \ar$
\item[-] $\tau(\neg\psi_1) := \neg\tau(\psi_1)$ 
\item[-] $\tau(\exists x\psi_1)$  is the $LOR$-formula equivalent to $\en x(\tau(\psi_1))$
\item[-] $\tau(\forall x\psi_1)$  is the $LOR$-formula equivalent to  $\fn x(\tau(\psi_1))$
\item[-] $\tau(\exists \xi''\psi_1)$  is the $LOR$-formula equivalent to  $\ert \xi' \;(CODE(\xi') \wedge \tau(\psi_1)(\xi'))$
\item[-] $\tau(\forall \xi''\psi_1)$  is the $LOR$-formula equivalent to  $\frt \xi'\;(CODE(\xi') \to \tau(\psi_1)(\xi'))$

\end{itemize}
\end{de}

\medskip

\begin{prop}\label{cd!!}
To ease the notation we use only one free sequence (natural number) variable. With the notation of Definition~\ref{tr} $$\forall\xi'' \frt\xi' \; (C(\xi'', \xi') \to (\theta(\xi'')\leftrightarrow \tau(\theta)(\xi'))$$
\end{prop}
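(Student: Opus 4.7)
The natural approach is induction on the complexity of the $L_1$-formula $\theta$, assuming for notational simplicity that $\theta$ has at most one free sequence variable $\xi''$ and at most one free natural-number variable (the general case is no harder — one simply carries along finite tuples of pairs $(\eta''_i,\eta'_i)$ satisfying $C(\eta''_i,\eta'_i)$). Before starting the induction I would invoke the remark preceding Definition~\ref{tr} and eliminate all term-building operators of mixed sort ($\Sigma_{y<x}\alpha(y)$, $\Pi_{y<x}\alpha(y)$, $\min$, $\max$) in favour of auxiliary sequence quantifiers, so that the only atomic formulas that need separate treatment are the first-order atomic formulas and those of the form $\xi''(t_1)=t_2$.

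For the two base cases: if $\theta$ is first-order atomic, $\tau(\theta)=\theta$ and there is nothing to prove (the $LOR$-terms built from $+,\cdot,0,1,<$ have the same meaning in $N$ as in the ambient real algebra, by Proposition~\ref{rec}). For $\theta\equiv\xi''(t_1)=t_2$, the equivalence $\xi''(t_1)=t_2\leftrightarrow C(\xi,t_1,t_2)$ is precisely the content of $C(\xi'',\xi')$ as given in Definition~\ref{codedef}. The propositional connective steps $\wedge,\vee,\to,\neg$ are immediate from the induction hypothesis, and the first-order quantifier steps $\exists x,\forall x$ use Proposition~\ref{rec}: quantification over $\{x\mid N(x)\}$ via $\en,\fn$ agrees with quantification over $\omega$, so the induction hypothesis applied pointwise yields the equivalence.

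The substantive case is the second-order quantifier step, which is driven entirely by Proposition~\ref{cd2}. For $\theta\equiv\exists\eta''\,\psi_1$: if $\eta''$ witnesses $\psi_1(\eta'')$, by \ref{cd2}(i) there is a $0$--$1$ sequence $\eta'$ with $C(\eta'',\eta')$, and in particular $CODE(\eta')$ holds and the induction hypothesis gives $\tau(\psi_1)(\eta')$; conversely, given a $0$--$1$ real-number generator $\eta'$ with $CODE(\eta')\wedge\tau(\psi_1)(\eta')$, \ref{cd2}(ii) supplies a (unique) $\eta''$ with $C(\eta'',\eta')$, and the induction hypothesis hands back $\psi_1(\eta'')$. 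The universal case $\forall\eta''\,\psi_1$ is dual, using both clauses of \ref{cd2} in tandem: any $\eta''$ produces a code $\eta'$ satisfying $CODE(\eta')$, hence $\tau(\psi_1)(\eta')$ and thus $\psi_1(\eta'')$, and vice versa.

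The step I expect to require the most care is the second-order quantifier case, specifically the bookkeeping that the auxiliary variables $\xi,\xi'$ introduced by $\tau$ are handled consistently under nested quantifiers — that is, that the $LOR$-formula $\tau(\psi_1)(\eta')$ really depends on $\eta'$ only through the coded values $\eta''(k)=m\leftrightarrow C(\eta,k,m)$, so that the induction hypothesis applies uniformly. This is why the statement is phrased relative to a fixed pair $(\xi'',\xi')$ with $C(\xi'',\xi')$: the induction must be carried out with a pairing of each free sequence variable with its designated code variable, and the quantifier clauses of Definition~\ref{tr} must be read as quantifying over such pairs (on the left an honest choice sequence, on the right its code as guaranteed by Proposition~\ref{cd2}). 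Once this correspondence is maintained throughout, the propositional and first-order steps are mechanical and the second-order steps reduce to quoting Proposition~\ref{cd2}.
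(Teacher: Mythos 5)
Your proposal is correct and follows essentially the same route as the paper: induction on the complexity of $\theta$, with the atomic and first-order cases dispatched via the definitions and the $N(x)$-relativized quantifiers, and the second-order quantifier cases carried entirely by Proposition~\ref{cd2} exactly as in the paper's own argument. Your additional remarks on eliminating the mixed-sort term operators and on pairing each free sequence variable with its designated code variable make explicit some bookkeeping the paper leaves implicit, but they do not change the substance of the proof.
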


\begin{proof}
By formula induction on the complexity of $\theta$. The interesting cases are the second-order quantifier cases. The first order atomic case is immediate, the case for 
$\tau(\xi_i''(t_1)=t_2)$ follows from  the definitions. The first order quantifier cases can be handled similarly as in \cite{eszmmlq2}.
\medskip

For the universal quantifier case
 first assume that $\forall \xi''\psi_1$ and $CODE(\xi')$. By Proposition~\ref{cd2} there is a unique $\eta$ such that $C(\eta, \xi')$ holds and from $\forall \xi''\psi_1$ 
we also have $\psi_1(\eta)$. Then by the induction hypothesis $\tau(\psi_1)(\xi')$, so  $\frt \xi'\;(CODE(\xi') \to \tau(\psi_1))$, ie. $\tau(\forall \xi''\psi_1)$ holds.
\smallskip

Next assume $\tau(\forall \xi''\psi_1)$, ie.  $\frt \xi'\;(CODE(\xi') \to \tau(\psi_1))$, we have to prove $\forall \xi''\;\psi_1$. By Proposition~\ref{cd2}  for any $\xi''$ there is a unique $\xi'$ such that $C(\xi'',\xi')$ and since from this $CODE(\xi')$ follows, we have $\tau(\psi_1)(\xi')$ by our assumption. We can apply the induction hypothesis to get $\psi_1(\xi'')$.

\medskip

For the existential quantifier case
 first assume that $\exists \xi''\psi_1$. For such $\xi''$, by Proposition~\ref{cd2} there is $\xi'$ with $C(\xi'', \xi')$, so $CODE(\xi')$ holds. By the induction hypothesis $\tau(\psi_1)(\xi')$ and we have $\ert \xi' \;(CODE(\xi') \wedge \tau(\psi_1)(\xi'))$ ie. $\tau(\exists \xi''\psi_1)$.

\medskip
Finally assume $\tau(\exists \xi''\psi_1)$, ie. $\ert \xi' \;(CODE(\xi') \wedge \tau(\psi_1)(\xi'))$.  By Proposition~\ref{cd2} for a witness $\xi'$ there is (a unique) $\eta$ such that $C(\eta,\xi')$ holds. By the induction hypothesis  $\psi_1(\eta)$ and we are done.
\end{proof}

The  next theorem is immediate.

\begin{thm}
For an $L_1$-sentence $\theta$, $\tau(\theta)$ is an equivalent (in the axiom system $\mathscr{T}$ from \cite{sco1} with R-KS added) $LOR$-sentence recursively obtained from $\theta$ .
\end{thm}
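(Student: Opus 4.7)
The plan is to assemble the theorem from two ingredients already established: the recursiveness of the translation given by Definition~\ref{tr}, and the semantic equivalence proved in Proposition~\ref{cd!!}. For the first ingredient I would verify that, given an arbitrary $L_1$-sentence $\theta$, a definite $LOR$-sentence $\tau(\theta)$ can be mechanically produced. This breaks into a preprocessing step and the main recursion. The preprocessing eliminates the bounded term-forming operations $\Sigma_{y<x}\alpha(y)$, $\Pi_{y<x}\alpha(y)$, $\min_{y<x}\alpha(y)$ and $\max_{y<x}\alpha(y)$ by introducing auxiliary choice sequences as in the schemas recalled immediately before Definition~\ref{tr}, leaving only atomic sequence-formulas of the form $\xi''(t_1)=t_2$. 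The main recursion then applies the clauses of Definition~\ref{tr}, each of which reduces to translations of strictly simpler subformulas; along the way the relativised quantifiers $\en$, $\fn$, $\ert$, $\frt$ are expanded into honest $LOR$ quantifiers using the formula $N(x)$ from Proposition~\ref{rec}, the relation $R(\xi)$, and the $0$-$1$-valuedness condition encoded via Lemma~\ref{cd!}. The composition of preprocessing and recursion is manifestly a recursive function on syntax.

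For the semantic part, Proposition~\ref{cd!!} asserts that whenever $C(\xi'',\xi')$ holds, one has $\theta(\xi'') \leftrightarrow \tau(\theta)(\xi')$. When $\theta$ is a sentence the free variables displayed there do not occur in either side of the biconditional, so what remains is simply the sentence-level equivalence $\theta \leftrightarrow \tau(\theta)$, contingent only on the existence of some coded pair. Such a pair is produced by Proposition~\ref{cd2}(i): every choice sequence has a unique $0$-$1$ code. Consequently the equivalence is immediate by specialising Proposition~\ref{cd!!} to any one pair of witnesses, all of this being carried out in $\mathscr{T}$ with R-KS, which is the background theory under which Propositions~\ref{cd2} and~\ref{cd!!} were derived.

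The only delicate point is the ``one free sequence variable'' convention under which Proposition~\ref{cd!!} was stated. While a sentence has no free variables of its own, the inductive argument of that proposition descends into subformulas that do, so when I cash it out I would check that the induction generalises verbatim to tuples $(\xi_1'',\dots,\xi_n'')$ paired with codes $(\xi_1',\dots,\xi_n')$ under the compound hypothesis $\bigwedge_i C(\xi_i'',\xi_i')$. Each of the three cases (propositional connectives, first-order quantifiers, second-order quantifiers) transfers unchanged once the extra variables are carried as passive parameters, with Proposition~\ref{cd2} supplying a code each time a new existential or universal binder is encountered. With that bookkeeping in place the theorem is indeed immediate, and I anticipate no conceptual obstacle beyond this routine generalisation.
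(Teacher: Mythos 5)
Your proposal is correct and follows exactly the route the paper intends: the paper offers no written argument (the theorem is declared ``immediate''), and the intended justification is precisely the combination of the recursive clauses of Definition~\ref{tr} (with the preprocessing of $\Sigma$, $\Pi$, $\min$, $\max$) for effectiveness, and Proposition~\ref{cd!!} specialised to sentences, with Proposition~\ref{cd2}(i) supplying a coded pair to detach the implication. Your remark on generalising the one-free-variable convention to tuples of parameters is a sensible piece of bookkeeping that the paper leaves implicit.
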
\qed

\section{Conclusion}
In this paper we gave an axiomatic background to the high complexity of intuitionistic real algebra; the fact that second order arithmetic is interpretable in these structures follows from intuitionistically plausible axioms. For an argument in favor of the schema R-KS see the Appendix of \cite{eszmjsl} where its consistency with $\mathscr T$ is also shown. 

Further investigations might address the following questions.
\be
\item Can the axiom system (R-KS in particular) be weakened but still result in  the same level of complexity?
\item Does R-KS have other noteworthy consequences?
\item What (if any) intuitionistically plausible strengthening of R-KS is consistent with $\mathscr T$?
\item In $\S 3.$ of \cite{eszmjsl} we showed that intuitionistic second-order structures built on a certain type of complete Heyting Algebra (called 'nice' in that paper) are models of R-KS. A question arises can this sufficient condition be weakened? Can we axiomatize properties of the second-order structure on the level of the underlying Heyting Algebra? 
\ee

\end{document}